\newcommand\lieh{{\mathfrak h}}
\newcommand\lieso{{\mathfrak so}}
\newcommand\CC{\mathbb C}
\newcommand\RR{\mathbb R}
\newcommand\NN{\mathbb N}
\newcommand\HH{\mathbb H}
\newcommand\GL{{\mathrm{GL}}}
\newcommand\SO{{\mathrm{SO}}}
\newcommand\OO{{\mathrm{O}}}
\newcommand\SU{{\mathrm{SU}}}
\newcommand\U{{\mathrm{U}}}
\newcommand\diag{{\mathrm{diag}}}
\newcommand\End{\operatorname{End}}
\newtheorem{thm}{Theorem}[section]
\newtheorem{prop}[thm]{Proposition}
\newtheorem{cor}[thm]{Corollary}
\theoremstyle{definition}
\newtheorem{defn}[thm]{Definition}
\theoremstyle{remark}
\title[Sphere Vs Projective Space]{Spherical Functions:\\ The Spheres {\footnotesize Vs.} The Projective Spaces}
\author{J. Tirao}
\author{I. Zurri\'an}
\address{CIEM-FaMAF, Universidad Nacional de C\'or\-do\-ba,
C\'or\-do\-ba~5000, Argentina}
\email{tirao@famaf.unc.edu.ar}
\email{zurrian@famaf.unc.edu.ar}
\thanks{This paper was partially supported by CONICET, PIP 112-200801-01533.}
\keywords{Spherical Functions - Orthogonal Group - Special Orthogonal Group - Group Representations}
\subjclass[2010]{20G05 - 43A90}
\begin{document}

\begin{abstract}
In this paper we establish a close relationship between the spherical functions of the $n$-dimensional sphere $S^n\simeq\SO(n+1)/\SO(n)$ and the spherical functions of the $n$-dimensional real projective space $P^n(\mathbb{R})\simeq\SO(n+1)/\mathrm{O}(n)$. In fact, for $n$ odd a function on $\SO(n+1)$
is an irreducible spherical function of some type $\pi\in\hat\SO(n)$ if and only if it is an irreducible spherical function of some type $\gamma\in\hat {\mathrm{O}}(n)$. When $n$ is even this is also true for certain types, and in the other cases we exhibit a clear correspondence between the irreducible spherical functions of both pairs $(\SO(n+1),\SO(n))$ and $(\SO(n+1),\mathrm{O}(n))$. Summarizing, to find all spherical functions of one pair is equivalent to do so for the other pair.

\end{abstract}
\maketitle
\section{Introduction.}

The theory of spherical functions dates back to the classical papers of \'E. Cartan and H. Weyl; they showed that spherical harmonics arise in a natural way from the study of functions on the $n$-dimensional sphere $S^n=\SO(n+1)/\SO(n)$. The first general results in this direction were obtained in 1950 by Gelfand who considered zonal spherical functions of a Riemannian symmetric space $G/K$.

The general theory of scalar valued spherical functions of arbitrary type, associated to a pair $(G,K)$ with $G$ a locally compact group and $K$ a compact subgroup, goes back to Godement and Harish-Chandra. Later, in \cite{T77} the attention was focused on the underlying matrix valued spherical functions defined as solutions of an integral equation, see Definition \ref{def}. These two notions are related by the operation of taking traces.

A first thorough study of irreducible spherical functions of any $K$-type was accomplished in the seminal work of the complex projective plane $P^2(\CC)=\SU(3)/\U(2)$ in \cite{GPT02a}. This study was generalized to the complex projective space $P^n(\CC)$, see \cite{PT12b}.

Let us remember the definition of spherical function:

\noindent Let $G$ be a locally compact unimodular group and let
$K$ be a compact subgroup of $G$. Let $\hat K$ denote
the set of all
equivalence classes of complex finite dimensional
irreducible representations of $K$; for each
$\delta\in \hat K$, let
$\xi_\delta$ denote the character of $\delta$,
$d(\delta)$ the degree of $\delta$, i.e. the dimension
of any representation in
the class $\delta$, and
$\chi_\delta=d(\delta)\xi_\delta$. We shall choose once
and for all the Haar measure $dk$ on
$K$ normalized by $\int_K dk=1$.

\noindent We shall denote by $V$ a finite dimensional vector
space over the field $\CC$ of complex numbers and by
$\End(V)$ the space
of all linear transformations of $V$ into $V$.
Whenever we refer to a topology on such a vector space
we shall be talking
about the unique Hausdorff linear topology on it.

\begin{defn} \label{def} A spherical function $\Phi$ on $G$ of type
$\delta\in \hat K$ is a continuous function on $G$ with values in
$\End(V)$ such that
\begin{enumerate} \item[i)] $\Phi(e)=I$ ($I$= identity
transformation).

\item[ii)] $\Phi(x)\Phi(y)=\int_K
\chi_{\delta}(k^{-1})\Phi(xky)\, dk$, for all $x,y\in
G$.
\end{enumerate}
\end{defn}
\noindent The reader can find a number of general results in \cite{T77} and
\cite{GV88}.

It is known that the compact connected symmetric spaces of rank one are of the form $X\simeq G/K$, where $G$ and $K$ are as follows:
\begin{enumerate}
\item[i)] {\ \hbox to 3cm{$G=\mathrm{SO}(n+1),$\hfill}\ \hbox to 4cm{ $K=\mathrm{SO}(n)$,\hfill}\ \hbox to 4cm{ $X=S^n.$\hfill}}
\item[ii)] {\ \hbox to 3cm{$G=\mathrm{SO}(n+1),$\hfill}\ \hbox to 4cm{ $K=\mathrm{O}(n)$,\hfill}\ \hbox to 4cm{ $X=P^n(\RR)$.\hfill}}
\item[iii)] {\ \hbox to 3cm{$G=\mathrm{SU}(n+1),$\hfill}\ \hbox to 4cm{ $K=\mathrm{S}(\mathrm{U}(n)\times\mathrm{U}(1))$,\hfill}\ \hbox to 4cm{ $X=P^n(\CC)$.\hfill}}
 \item[iv)] {\ \hbox to 3cm{$G=\mathrm{Sp}(n+1),$\hfill}\ \hbox to 4cm{ $K=\mathrm{Sp}(n)\times\mathrm{Sp}(1)$,\hfill}\ \hbox to 4cm{ $X=P^n(\HH)$.\hfill}}
\item[v)]   {\ \hbox to 3cm{$G=F_{4(-52)},$\hfill}\ \hbox to 4cm{ $K=\mathrm{Spin}(9)$,\hfill}\ \hbox to 4cm { $X=P^{2}(Cay)$.\hfill}}
\end{enumerate}
The zonal (i.e. of trivial $K$-type) spherical functions on $X\simeq G/K$ are the eigenfunctions of the Laplace-Beltrami operator that only depend on the distance $d(x,o)$, $x \in X$, where $o$ is the origin of $X$. In each case we call them $\varphi_0, \varphi_1, \varphi_2, \ldots, $ with $\varphi_0=1$, and let $\varphi_j^*(\theta)$ be the corresponding function induced on $[0, L]$ by $\varphi_j$, where $L$ is the diameter of $X$.  These functions turn
 out to be Jacobi polynomials
$$\varphi_j^*(\theta) =c_j\, P_j^{(\alpha,\beta)}(\cos \lambda \theta),$$
with $c_j$ defined by the condition $\varphi_j(0)=1$ and $\lambda$, $\alpha$ and $ \beta $  depending on the pair $(G,K)$. By renormalization of the distance we can assume $\lambda=1$ and $L=\pi$, (cf. \cite[p. 171]{H62}). Now we quote the following list from \cite[p. 239]{K73}:
\begin{enumerate}
\item[i)]  {\ \hbox to 4cm{$ G/K\simeq  S^n:$\hfill}                 \ \hbox to 3.5cm{$\alpha=(n-2)/2$,\hfill}$\beta=(n-2)/2$.}
\item[ii)] {\ \hbox to 4cm{$ G/K\simeq  P^n(\RR):$\hfill}            \ \hbox to 3.5cm{$\alpha=(n-2)/2$,\hfill}$\beta=-1/2$.}
\item[iii)]{\ \hbox to 4cm{$ G/K\simeq  P^n(\CC):$\hfill}            \ \hbox to 3.5cm{$\alpha=n-1$,\hfill}$\beta=0$.}
\item[iv)] {\ \hbox to 4cm{$ G/K\simeq  P^n(\mathbb H):$\hfill}      \ \hbox to 3.5cm{$\alpha=2n-1$,\hfill}$\beta=1$.}
\item[v)]  {\ \hbox to 4cm{$ G/K\simeq  P^2(Cay):$\hfill}            \ \hbox to 3.5cm{$\alpha=7$,\hfill}$\beta=3$.}
\end{enumerate}


Therefore, at first sight, the zonal spherical functions on the sphere and on the real projective space seem to be two completely different families (to clarify this point see the Appendix). But we prove in this paper that to know all the spherical functions associated to the $n$-dimensional sphere is equivalent to know them for the $n$-dimensional real projective space. Precisely, we state a direct relation between matrix valued spherical functions of the pair $(\mathrm{SO}(n+1),\mathrm{SO}(n))$ and of the pair $(\mathrm{SO}(n+1),\mathrm{O}(n))$.
In first place we prove that, for $n$ even the spherical functions of the sphere and the spherical functions of the real projective space are the same, i.e., a function $\Phi$ on $\mathrm{SO}(n+1)$ is an irreducible spherical function of type $\pi\in \hat{\mathrm{SO}}(n)$ if and only if there exists $\gamma\in\hat{\mathrm{O}}(n)$ such that the function $\Phi$ is a spherical function of type $\gamma$. When $n$ is odd there are some particular cases in which one has the same situation as when $n$ is even, and we show that these cases are easily distinguished by looking at the highest weight of the corresponding $\SO(n)$-types. For the generic cases we show how every irreducible spherical function of the projective space is explicitly related with two spherical functions of the sphere, see Theorem \ref{Matrix}.

 An immediate consequence of this paper is obtained by combining it with \cite{PTZ12}, where all irreducible spherical functions of the pair $(\SO(4),$ $\SO(3))$ are studied and exhibited. Therefore, by applying Theorem \ref{par} we also know all spherical functions of the pair $(\SO(4),\mathrm{O}(3))$, which as functions on $\SO(4)$ are the same as those of the pair $(\SO(4),\mathrm{SO}(3))$.

\section{Representations.}
Spherical functions of type $\delta\in\hat K$ arise in a natural way upon
considering representations of $G$. If $g\mapsto \tau(g)$ is a
continuous representation of $G$, say on a finite dimensional vector
space $E$, then $$P_\delta=\int_K \chi_\delta(k^{-1})\tau(k)\, dk$$ is
a projection of $E$ onto $P_\delta E=E(\delta)$. The function
$\Phi:G\longrightarrow \End(E(\delta))$ defined by
$$\Phi(g)a=P_\delta \tau(g)a,\quad g\in G,\; a\in E(\delta),$$
is a spherical function of type $\delta$. In fact, if $a\in E(\delta)$ we have
\begin{align*}
\Phi(x)\Phi(y)a&= P_\delta \tau(x)P_\delta \tau(y)a=\int_K \chi_\delta(k^{-1})
P_\delta \tau(x)\tau(k)\tau(y)a\, dk\\
&=\left(\int_K\chi_\delta(k^{-1})\Phi(xky)\, dk\right) a.
\end{align*}

If the representation $g\mapsto \tau(g)$ is irreducible then the
spherical function $\Phi$ is also irreducible. Conversely, any irreducible
spherical function on a compact group $G$ arises in this way from a finite dimensional irreducible representation of $G$.

Now we recall how one obtains the irreducible finite dimensional representations of $\OO(n)$ from the irreducible finite dimensional representations of $\SO(n)$.
Let us take $a\in\mathrm{O}(n)$ depending on $n$:
\begin{align*}
 a&=\diag(1,1,\dots,-1), &\text{if $n$ is even,}\\
a&=\diag(-1,-1,\dots,-1), &\text{if $n$ is odd.}
\end{align*}
And let $\phi$ be the automorphism of $\SO(n)$ defined by
 $$\phi(k)=aka,$$
for all $k\in \SO(n)$. Notice that when $n$ is odd $\phi$ is trivial and $\OO(n)=\SO(n)\times F$, where $F=\{1,a\}$. Therefore in this case the irreducible finite dimensional representations of $\OO(n)$ are of the form
$\gamma=\pi\otimes 1$ or $\gamma=\pi\otimes \epsilon$ where $\pi$ is an irreducible finite dimensional representation of
$\SO(n)$ and $\epsilon$ is the nontrivial character of $F$. Thus we have the following theorem.

\begin{thm} If $n$ is odd $\OO(n)=\SO(n)\times F$, and $\hat\SO(n)\times \hat F$ can be identified with the unitary dual of $\OO(n)$,  under the bijection $([\pi],1)\mapsto [\pi\otimes1]$ and $([\pi],\epsilon)\mapsto [\pi\otimes\epsilon]$.
\end{thm}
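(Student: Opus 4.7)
The plan is to split the theorem into two independent pieces: first the group-theoretic claim $\OO(n)=\SO(n)\times F$, and then the identification of the unitary dual as the product.

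For the first piece, I would verify the three conditions for an internal direct product. Since $n$ is odd, $a=\diag(-1,\dots,-1)=-I$, so $\det(a)=(-1)^n=-1$; hence $a\notin \SO(n)$ and $\SO(n)\cap F=\{1\}$. Because $a=-I$ is central in $\GL(n,\RR)$, every element of $F$ commutes with every element of $\SO(n)$, so $\phi(k)=aka=k$, which also explains the remark that $\phi$ is trivial. To see that $\SO(n)\cdot F=\OO(n)$, take any $k\in\OO(n)$. If $\det(k)=1$ then $k=k\cdot 1$; if $\det(k)=-1$ then $\det(ak)=(-1)(-1)=1$, so $ak\in\SO(n)$ and $k=a\cdot(ak)\cdot$ (using $a^2=I$) lies in $F\cdot\SO(n)$. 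Together with centrality this gives $\OO(n)\cong \SO(n)\times F$.

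For the second piece, I would invoke the standard fact that for compact groups $G_1,G_2$ the unitary dual of the direct product is $\widehat{G_1\times G_2}=\hat G_1\times \hat G_2$, with the bijection sending $([\pi_1],[\pi_2])$ to the outer tensor product $[\pi_1\otimes\pi_2]$ defined by $(\pi_1\otimes\pi_2)(g_1,g_2)=\pi_1(g_1)\otimes\pi_2(g_2)$. This follows, in the direction that $\pi_1\otimes\pi_2$ is irreducible, from Schur's lemma applied to the commutant; the reverse, that every irreducible representation of $G_1\times G_2$ arises this way, follows from decomposing the restriction to $G_1\times\{1\}$ into isotypic components and observing that the $\{1\}\times G_2$-action permutes them but must preserve each isotypic component by commutativity of the two actions, forcing a single $G_1$-type. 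Applying this with $G_1=\SO(n)$ and $G_2=F$, and noting that $\hat F=\{1,\epsilon\}$, yields exactly the two families $[\pi\otimes 1]$ and $[\pi\otimes \epsilon]$ indexed by $\hat\SO(n)\times \hat F$.

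There is no real obstacle here; the content is entirely the internal direct product verification, and the mild point worth being careful about is the parity argument showing $\det(a)=-1$ (which is what makes the odd case work, in contrast with the even case where $a$ has determinant $+1$ but is used to generate the nontrivial outer automorphism $\phi$). The result is then a one-line consequence of Peter--Weyl for finite direct factors combined with the classification of irreducibles of the two-element group $F$.
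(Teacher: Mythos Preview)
Your proposal is correct and follows exactly the approach the paper sketches in the paragraph preceding the theorem: the paper simply notes that for $n$ odd the automorphism $\phi$ is trivial (since $a=-I$ is central), so $\OO(n)=\SO(n)\times F$, and then invokes the standard classification of irreducibles of a direct product. You have filled in the routine verifications (the parity check $\det(-I)=-1$, the internal direct product conditions, and the tensor-product description of $\widehat{G_1\times G_2}$) that the paper leaves implicit.
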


If $n$ is even we have $\OO(n)=\SO(n)\rtimes F$. Let us denote by $V_\pi$ the vector space associated to $\pi\in \hat\SO(n)$, then set $V_{\pi_\phi}=V_\pi$ and let $\pi_\phi:\SO(n)\rightarrow\End(V_{\pi_\phi})$ be the irreducible representation of $\mathrm{SO}(n)$ given by
 $$\pi_\phi=\pi\circ\phi.$$
In this situation we shall consider two cases: $\pi_\phi\sim\pi$ in Subsection \ref{equiv}  and $\pi_\phi\nsim\pi$ in Subsection \ref{nequiv} .

\subsection{When $\pi_\phi$ is equivalent to $\pi$}\label{equiv}\

Take $A\in\GL(V)$ such that $\pi_\phi(k)=A\pi(k)A^{-1}$ for all $k\in \SO(n)$. Then
$$\pi(k)=\pi(a(aka)a)=\pi_\phi(aka)=A\pi(aka)A^{-1}=A\pi_\phi(k)A^{-1}=A^2\pi(k)A^{-2}.$$
Therefore, by Schur's Lemma, we have $A^2=\lambda I$. By changing $A$ by $\sqrt{\lambda^{-1}} A$ we may assume that $A^2=I$. Let $\epsilon_A$ be the representation of $F$ defined by \begin{equation*}
\epsilon_A(a)=A            .                                                                                                                                                                          \end{equation*}
 Now we define $\gamma=\pi\cdot\epsilon_A:\OO(n)\rightarrow\GL(V)$ by
\begin{equation}\label{gammaA}
 \gamma(kx)=\pi(k)\epsilon_A(x),
\end{equation}
 and it is easy to verify that  $\gamma$ is an irreducible representation of $\OO(n)$. Moreover, if $B$ is another solution of $\pi_\phi(k)=B\pi(k)B^{-1}$ for all $k\in \SO(n)$, and $B^2=I$, then $B=\pm A$. In fact, by Schur's Lemma, $B=\mu A$ and $\mu^2=1$. In the set of all such pairs $(\pi,A)$ we introduce the equivalence relation $(\pi,A)\sim(T\pi T^{-1},TAT^{-1})$, where $T$ is any bijective linear map from $V$ onto another vector space, and set $[\pi,A]$ for the equivalence class of $(\pi,A)$.

\begin{prop} \label{equivalent} When $n$ is even $\OO(n)=\SO(n)\rtimes F$. Let us assume that $\pi_\phi\sim\pi$.  If $\gamma=\pi\cdot\epsilon_A$, then $\gamma$ is an irreducible representation of $\OO(n)$. Moreover, $\gamma'=\pi\cdot\epsilon_{-A}$, is another irreducible representation of $\OO(n)$ not equivalent to $\gamma$. Besides, the set $\{[\pi,A]: \pi_\phi(k)=A\pi(k)A^{-1}, A^2=I\}$ can be included in $\hat\OO(n)$ via the map
$[\pi,A]\mapsto [\pi\cdot\epsilon_A]$.
\end{prop}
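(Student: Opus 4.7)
The plan is to establish the three assertions in sequence, each reducing to a direct computation once one unwinds the semidirect product $\OO(n)=\SO(n)\rtimes F$ with $F=\{1,a\}$ acting on $\SO(n)$ by $\phi$.

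First I would check that $\gamma=\pi\cdot\epsilon_A$ is a homomorphism. Using the product rule $(k_1 x_1)(k_2 x_2)=k_1(x_1 k_2 x_1^{-1})\,x_1 x_2$, both sides of $\gamma((k_1 x_1)(k_2 x_2))=\gamma(k_1 x_1)\gamma(k_2 x_2)$ unfold into expressions in $\pi$ and $A$; checking the four cases $(x_1,x_2)\in F\times F$ uses exactly the two defining relations $A\pi(k)A^{-1}=\pi_\phi(k)$ and $A^2=I$. The nontrivial case $x_1=x_2=a$ is where $A^2=I$ is actually needed. Once $\gamma$ is a representation, irreducibility is automatic: any $\gamma$-invariant subspace $W\subset V$ is in particular $\pi$-invariant, hence $W=\{0\}$ or $W=V$.

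For the second assertion, $-A$ also satisfies $(-A)^2=I$ and $(-A)\pi(k)(-A)^{-1}=\pi_\phi(k)$, so the same argument produces the representation $\gamma'=\pi\cdot\epsilon_{-A}$. To rule out $\gamma\sim\gamma'$, assume $T\gamma(y)=\gamma'(y)T$ for every $y\in\OO(n)$. Restriction to $\SO(n)$ gives $T\pi(k)=\pi(k)T$ for all $k$, so Schur's lemma forces $T=\mu I$. Evaluating at $y=a$ yields $\mu A=-\mu A$, whence $\mu=0$; so no nonzero intertwiner exists and $\gamma\nsim\gamma'$.

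The third assertion has two parts. For well-definedness of the map, given a linear bijection $T:V\to V'$ I would show that $T$ itself intertwines $\pi\cdot\epsilon_A$ with $(T\pi T^{-1})\cdot\epsilon_{TAT^{-1}}$; this is verified separately on $\SO(n)$ (where it amounts to $T\pi(k)=(T\pi(k)T^{-1})T$) and at $y=a$ (where it amounts to $TA=(TAT^{-1})T$). For injectivity, any equivalence $T$ between $\pi_1\cdot\epsilon_{A_1}$ and $\pi_2\cdot\epsilon_{A_2}$ restricts to an intertwiner $T\pi_1(k)=\pi_2(k)T$ and, evaluated at $a$, gives $TA_1=A_2 T$, i.e.\ $(\pi_1,A_1)\sim(\pi_2,A_2)$ in the sense of the excerpt. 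The hardest part is arguably only the patient bookkeeping in the first step; no deep obstruction appears, the whole proposition being essentially a verification of Mackey-style facts for the order-two extension $\SO(n)\hookrightarrow\OO(n)$.
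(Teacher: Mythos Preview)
Your argument is correct and follows the same route as the paper: the paper declares the homomorphism check, irreducibility, and well-definedness of $[\pi,A]\mapsto[\pi\cdot\epsilon_A]$ to be routine (exactly as you verify them) and proves only $\gamma\nsim\gamma'$, by the same Schur's-lemma step --- an intertwiner restricted to $\SO(n)$ must be a scalar, and evaluating at $a$ forces $A=-A$. Your version differs only cosmetically in allowing a possibly noninvertible intertwiner and concluding $\mu=0$ rather than deriving a contradiction from $T\in\GL(V)$.
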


\begin{proof} The only thing that we really need to prove is that $\gamma'\nsim\gamma$. In fact, if $\gamma'=T\gamma T^{-1}$ for some $T\in\GL(V)$, then, since $\gamma'_{\vert \SO(n)}=\gamma_{\vert \SO(n)}=\pi$, Schur's Lemma implies that $T=\lambda I$. Hence $-A=\gamma'(a)=T\gamma(a)T^{-1}=\gamma(a)=A$ is a contradiction.
\end{proof}

\subsection{When $\pi_\phi$ is not equivalent to $\pi$}\label{nequiv}\

Assume that $n$ is even and that $\pi_\phi\nsim\pi$. Let us consider the $\SO(n)$-module $V_\pi\times V_\pi$ and define
\begin{equation}\label{gamma}
 \gamma(k)(v,w)=(\pi(k)v,\pi_\phi(k)w),\quad
\gamma(ka)(v,w)=(\pi(k)w,\pi_\phi(k)v),
\end{equation}
for all $k\in \SO(n)$ and $v,w\in V_\pi$. Then it is easy to verify that  $\gamma:\OO(n)\rightarrow\GL(V_\pi\times V_\pi)$ is a representation of $\OO(n)$.

\begin{prop}\label{equivalent2} Assume that $n$ is even and $\pi\in\hat\SO(n)$, then $\OO(n)=\SO(n)\rtimes F$. Moreover, if $\pi_\phi\nsim\pi$ we define
$\gamma:\SO(n)\times F\rightarrow\GL(V_\pi\times V_\pi)$ as in \eqref{gamma}, then $\gamma$ is an irreducible representation of $\OO(n)$. Also $\gamma':\SO(n)\times F\rightarrow\GL(V_\pi\times V_\pi)$ defined by $$\gamma'(k)(v,w)=(\phi(k)v,kw),\qquad \gamma'(ka)(v,w)=(\phi(k)w,kv),$$ for all $k\in \SO(n)$ and $v,w\in V_\pi$, is an irreducible representation of $\OO(n)$, but it is equivalent to $\gamma$. Besides, the set $\{\{[\pi],[\pi_\phi]\}: [\pi]\in \hat \SO(n)\}$ can be included in $\hat\OO(n)$ via the map
$\{[\pi],[\pi_\phi]\}\mapsto [\gamma]$.
\end{prop}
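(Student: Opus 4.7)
The plan is to dispatch the four assertions in turn: (a) that $\gamma$ as given in \eqref{gamma} is actually a representation of $\OO(n)$, (b) that it is irreducible, (c) that $\gamma'$ is a representation equivalent to $\gamma$, and (d) that the assignment $\{[\pi],[\pi_\phi]\}\mapsto[\gamma]$ is a well-defined injection into $\hat\OO(n)$. Item (a) is a direct computation (one checks $\gamma(k_1)\gamma(k_2)=\gamma(k_1k_2)$, $\gamma(k_1)\gamma(k_2a)=\gamma(k_1k_2a)$, $\gamma(k_1a)\gamma(k_2)=\gamma(k_1a k_2)=\gamma(k_1\phi(k_2)a)$, and $\gamma(k_1a)\gamma(k_2a)=\gamma(k_1\phi(k_2))$, using that $aka=\phi(k)$ and $\pi_\phi=\pi\circ\phi$); the paper has already noted that this is routine.

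For irreducibility, the key observation is that the restriction $\gamma|_{\SO(n)}$ decomposes as $\pi\oplus\pi_\phi$ on $V_\pi\times V_\pi$, and by hypothesis $\pi\nsim\pi_\phi$. Hence the only $\SO(n)$-invariant subspaces are $0$, $V_\pi\times 0$, $0\times V_\pi$, and the whole space. Now $\gamma(a)(v,w)=(w,v)$ interchanges $V_\pi\times 0$ and $0\times V_\pi$, so neither of the two proper nonzero $\SO(n)$-invariant subspaces is stable under the action of $a$. Thus the only $\OO(n)$-invariant subspaces are the trivial ones, and $\gamma$ is irreducible.

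For the equivalence of $\gamma$ and $\gamma'$, I would try the obvious swap $T:V_\pi\times V_\pi\to V_\pi\times V_\pi$, $T(v,w)=(w,v)$. A short calculation gives $T\gamma(k)(v,w)=(\pi_\phi(k)w,\pi(k)v)=\gamma'(k)T(v,w)$ and analogously on the coset $\SO(n)a$, so $T$ intertwines $\gamma$ and $\gamma'$. Finally, for the injection: if one replaces $\pi$ by $\pi_\phi$ in the construction, then since $(\pi_\phi)_\phi=\pi\circ\phi\circ\phi=\pi$, the resulting representation is exactly $\gamma'$ of \eqref{gamma} (with the roles of the two factors swapped), which by the previous step is equivalent to $\gamma$; hence the class $[\gamma]$ depends only on the unordered pair $\{[\pi],[\pi_\phi]\}$. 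Conversely, if two such constructions produce equivalent representations of $\OO(n)$, then their restrictions to $\SO(n)$ are equivalent, and the decomposition into inequivalent irreducible $\SO(n)$-summands recovers the pair $\{[\pi],[\pi_\phi]\}$ uniquely. The main place where care is needed is the irreducibility argument, where one must use the hypothesis $\pi_\phi\nsim\pi$ essentially to rule out diagonal-type invariant subspaces; this is precisely what fails in the setting of Proposition \ref{equivalent}, where $A$ provides an intertwiner and one gets two inequivalent extensions instead of one.
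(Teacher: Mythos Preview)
Your proposal is correct and follows essentially the same approach as the paper. The paper's irreducibility argument is phrased as a contradiction via complete reducibility (a nontrivial $\OO(n)$-decomposition $W_1\oplus W_2$ would force $W_1\sim V_\pi$, and then $\gamma_1(a)$ would intertwine $\pi$ and $\pi_\phi$), whereas you enumerate the $\SO(n)$-submodules directly and check that $\gamma(a)$ swaps the two factors; both arguments rest on the same observation. For the equivalence $\gamma\sim\gamma'$ the paper uses exactly your transposition map, and your treatment of the well-definedness and injectivity of $\{[\pi],[\pi_\phi]\}\mapsto[\gamma]$ simply makes explicit what the paper leaves to the reader.
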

\begin{proof}
Let us prove that $\gamma$ is irreducible. Then $V_\pi\times V_\pi\sim V_\pi\oplus V_{\pi_\phi}$ as $\SO(n)$-modules defined by $\gamma$. If $V_\pi\times V_\pi=W_1\oplus W_2$ and correspondingly  $\gamma=\gamma_1\oplus\gamma_2$ as $\OO(n)$-modules, then we may assume that $W_1\sim V_\pi$ and $W_2\sim V_{\pi_\phi}$ as $\SO(n)$-modules. But
$\gamma_1(aka)=\gamma_1(a)\gamma_1(k)\gamma_1(a)$ implies that $\pi_\phi\sim\pi$, contradiction. Thus $\gamma$ is irreducible.

Finally it is easy to check that the transposition map $s: V_\pi\times V_\pi\rightarrow V_\pi\times V_\pi$ defined by $s(v,w)=(w,v)$ is an intertwing operator between $\gamma$ and $\gamma'$.
\end{proof}

\begin{thm} Assume that $n$ is even. We split $\hat\OO(n)$ into two disjoint sets: (a) $\{[\gamma]:\gamma_{\vert \SO(n)} \; {\text irreducible}\}$ and (b) $\{[\gamma]:\gamma_{\vert \SO(n)} \;{\text reducible}\}$.
\noindent\begin{enumerate} \item[(a)]
If $[\gamma]$ is in the first set and $\pi=\gamma_{\vert \SO(n)}$, then
$$\pi_\phi(k)=\pi(aka)=\gamma(aka)=\gamma(a)\pi(k)\gamma(a),$$
for all $k\in \SO(n)$. Therefore $\pi_\phi\sim\pi$ and $\gamma$ is equivalent to the representation $\pi\cdot\epsilon_A$ constructed from $\pi$ and $A=\gamma(a)$ in \eqref{gammaA}.
\item[(b)]
If $[\gamma]$ is in the second set, let $W$ be the representation space of $\gamma$. Let $V_\pi<W$ be an irreducible $\SO(n)$-module. Then $W=V_\pi\oplus V_{\pi_\phi}$ as $\SO(n)$-modules, and $\gamma$ is equivalent to the representation $\gamma'$ defined on $V_\pi\times V_\pi$ by \eqref{gamma}.
\end{enumerate}
\end{thm}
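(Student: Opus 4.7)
My plan is to treat the two cases in order; case (a) is essentially a one-line check, and almost all the work, along with the main conceptual obstacle, lies in case (b).

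For case (a), set $A=\gamma(a)$. Since $a^2=e$ we have $A^2=\gamma(a^2)=I$, and the computation displayed in the statement, $\gamma(a)\pi(k)\gamma(a)=\gamma(aka)=\pi_\phi(k)$, shows that $A$ intertwines $\pi$ with $\pi_\phi$. Hence $\epsilon_A$ is well defined and $\gamma=\pi\cdot\epsilon_A$ follows by checking on each coset: $\gamma(k)=\pi(k)$ for $k\in\SO(n)$, and $\gamma(ka)=\gamma(k)\gamma(a)=\pi(k)A=\pi(k)\epsilon_A(a)$.

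For case (b), pick an irreducible $\SO(n)$-submodule $V_\pi\subseteq W$ and set $V'=\gamma(a)V_\pi$. The identity $ka=a\phi(k)$ gives $\gamma(k)\gamma(a)v=\gamma(a)\pi_\phi(k)v$, so $V'$ is an $\SO(n)$-submodule and $\gamma(a)\colon V_\pi\to V'$ is an $\SO(n)$-isomorphism from $V_{\pi_\phi}$ onto $V'$. The intersection $V_\pi\cap V'$ is an $\SO(n)$-submodule of $V_\pi$, hence equals $0$ or $V_\pi$; the latter forces $V_\pi=V'$ by dimensions, making $V_\pi$ both $\SO(n)$- and $\gamma(a)$-stable, hence $\OO(n)$-stable and therefore equal to $W$ by $\OO(n)$-irreducibility of $\gamma$, contradicting reducibility of $\gamma_{\vert \SO(n)}$. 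Thus $V_\pi\cap V'=0$, and since $V_\pi+V'$ is $\OO(n)$-stable and nonzero it must equal $W$. Therefore $W=V_\pi\oplus V'\sim V_\pi\oplus V_{\pi_\phi}$ as $\SO(n)$-modules.

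The main obstacle is to rule out the possibility $\pi_\phi\sim\pi$, which is implicit in the conclusion (since the representation $\gamma'$ of \eqref{gamma} is only defined under $\pi_\phi\nsim\pi$). Arguing by contradiction, assume $\pi_\phi\sim\pi$: then $W$ is $\pi$-isotypic of multiplicity two, and the $\SO(n)$-submodules of $W$ isomorphic to $V_\pi$ form a projective line $\Sigma\sim\mathbb{P}^1(\CC)$. Because $\gamma(a)$ sends each such submodule to another (still $\SO(n)$-iso to $V_{\pi_\phi}\sim V_\pi$) and $\gamma(a)^2=I$, it induces an involution of $\Sigma$. Every involution of $\mathbb{P}^1(\CC)$ has a fixed point, and such a fixed point corresponds to a proper $\SO(n)$-submodule $U\subset W$ with $\gamma(a)U=U$, hence $\OO(n)$-invariant, contradicting the irreducibility of $\gamma$. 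Finally, to produce the equivalence $\gamma\sim\gamma'$, define $T\colon V_\pi\times V_\pi\to W$ by $T(v,w)=v+\gamma(a)w$; this is a linear isomorphism by the decomposition above, and the identities $T\gamma'(k)=\gamma(k)T$ and $T\gamma'(a)=\gamma(a)T$ follow by direct substitution from the defining formulas \eqref{gamma} together with $\gamma(k)\gamma(a)=\gamma(a)\pi_\phi(k)$ and $\gamma(a)^2=I$.
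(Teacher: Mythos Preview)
Your proof is correct and tracks the paper's argument closely: the decomposition $W=V_\pi\oplus\gamma(a)V_\pi$ and the intertwining map $T(v,w)=v+\gamma(a)w$ are exactly what the paper uses. The one genuine difference is in ruling out $\pi_\phi\sim\pi$. The paper does this explicitly: assuming $\pi_\phi=A\pi A^{-1}$ with $A^2=I$, it exhibits the subspace $\{v+\gamma(a)Av:v\in V_\pi\}$ and checks by hand that it is $\SO(n)$-stable and $\gamma(a)$-stable, hence a proper $\OO(n)$-submodule. Your argument is more conceptual, passing to the projective line of irreducible $\SO(n)$-submodules and invoking a fixed point of the involution induced by $\gamma(a)$. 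This is valid, but note that the statement ``every involution of $\mathbb{P}^1(\CC)$ has a fixed point'' is false for arbitrary (even continuous) involutions---the antipodal map on $S^2$ has none---so you are implicitly using that the induced involution is projective-linear. This is true (the map $f\mapsto\gamma(a)\circ f\circ A^{-1}$ is linear on $\mathrm{Hom}_{\SO(n)}(V_\pi,W)\cong\CC^2$), but it deserves a sentence. The paper's explicit construction sidesteps this subtlety; your argument, once that point is made, has the advantage of explaining \emph{why} such a subspace must exist rather than producing it out of thin air.
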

\begin{proof} The first assertion does not need to be proved. So, let us assume that $[\gamma]$ is in the second set. Let us first check that $\gamma(a)V_\pi$ is a $\SO(n)$-module: if $v\in V_\pi$ and $k\in \SO(n)$, then $\gamma(k)\gamma(a)v=\gamma(a)\gamma(aka)v\in\gamma(a)V_\pi$. Thus $V_\pi\cap\gamma(a)V_\pi$ and $V_\pi+\gamma(a)V_\pi$ are  $\OO(n)$-modules, because they are $\SO(n)$-modules stable under $\gamma(a)$. Therefore, by irreducibility we have $W=V_\pi\oplus\gamma(a)V_\pi$. Since $\gamma(k)\gamma(a)v=\gamma(a)\pi_{\phi}(k)v$ we obtain that $\gamma\vert_{\gamma(a)V_\pi}\sim\pi_\phi$, then we have $\gamma=\pi\oplus\pi_\phi$ and $W=V_\pi\oplus V_{\pi_\phi}$.

If $\pi_{\phi}$ and $\pi$ are equivalent, then there exits $A\in\GL(V_\pi)$ such that $\pi_{\phi}(k)=A\pi(k)A^{-1}$. Let us now consider the linear space $\{(v,\gamma(a)Av)\}\ne W$, and prove that it is an $\OO(n)$-module, which is a contradiction: $$\gamma(k)(v,\gamma(a)Av)=(kv,\gamma(k)\gamma(a)Av)=(kv,\gamma(a)\pi_{\phi}(k)Av)=(kv,\gamma(a)Akv)$$ and $\gamma(a)(v,\gamma(a)Av)=(Av,\gamma(a)AAv)$. Therefore $\pi_{\phi}\nsim\pi$. Finally, let us check that the representation $\gamma'$ given by \eqref{gamma} is equivalent to $\gamma$: Let $T:V_\pi\times V_\pi\rightarrow V_\pi\oplus\gamma(a)V_\pi$ be the linear map defined by $T(v,w)=v+\gamma(a)w$, then
$$T\gamma'(k)(v,w)=\pi(k)v+\gamma(a)\pi_\phi(k)w=\pi(k)v+\gamma(k)\gamma(a)w=\gamma(k)T(v,w),$$
$$T\gamma'(a)(v,w)=T(w,v)=w+\gamma(a)v=\gamma(a)(v+\gamma(a)w)=\gamma(a)T(v,w).$$
\end{proof}

\subsection{The highest weights of $\pi$ and $\pi_\phi$.}	
\

When $n$ is even it would be very useful to know when $\pi\in\hat \SO(n)$ is equivalent to $\pi_\phi$. In that direction we prove a very simple criterion in terms of the highest weight of $\pi$.

For a given $\ell\in\mathbb{N}$, we know from \cite{vilenkin} that the highest weight of an irreducible representation $\pi$ of $\mathrm{SO}(2\ell)$ is of the form ${\bf m}_\pi=(m_1, m_2, m_3, \dots , m_\ell)$ $\in\mathbb{Z}^\ell$, with $$m_1\ge m_2\ge m_3\ge\dots\ge m_{\ell-1} \ge |m_\ell|.$$
\begin{thm}\label{weights}
 If ${\bf m}_\pi=(m_1, m_2, m_3, \dots , m_\ell)$ is the highest weight of $\pi\in\hat{\mathrm{SO}}(2\ell)$ then ${\bf m}_{\pi_\phi}=(m_1, m_2, m_3, \dots , -m_\ell)$ is  the highest weight of $\pi_\phi$.
\end{thm}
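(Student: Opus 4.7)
The plan is to track how $\phi$ acts on a convenient Cartan subalgebra and root system of $\lieso(2\ell)$, verify that it realises the outer diagram involution of $D_\ell$ exchanging the two ``half-spin'' simple roots, and then transport a highest weight vector of $\pi$ to one for $\pi_\phi$.

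I take the standard Cartan subalgebra $\lieh\subset\lieso(2\ell)$ with basis $H_j=E_{2j-1,2j}-E_{2j,2j-1}$, $j=1,\dots,\ell$, sitting in the $2\times 2$ rotation blocks, and let $\epsilon_j\in\lieh^*$ be the dual basis. Since $aE_{ij}a=a_{ii}a_{jj}E_{ij}$ for $a=\diag(1,\dots,1,-1)$, a direct matrix computation gives $\phi_*(H_j)=aH_ja=H_j$ for $j<\ell$ and $\phi_*(H_\ell)=-H_\ell$; dually, $\phi^*\epsilon_j=\epsilon_j$ for $j<\ell$ and $\phi^*\epsilon_\ell=-\epsilon_\ell$. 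With the standard positive system $\{\epsilon_i\pm\epsilon_j:i<j\}$ for $D_\ell$, the map $\phi^*$ fixes $\epsilon_i\pm\epsilon_j$ whenever $j<\ell$ and swaps $\epsilon_i+\epsilon_\ell$ with $\epsilon_i-\epsilon_\ell$, so it permutes the positive roots; in particular $\phi_*$ preserves $\mathfrak{n}^+=\bigoplus_{\alpha>0}\lieg_\alpha$.

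Next let $v_\lambda$ be a highest weight vector of $\pi$ with weight $\lambda=\sum_j m_j\epsilon_j$. Regarding $v_\lambda$ inside the representation space of $\pi_\phi=\pi\circ\phi$, for every $H\in\lieh$ one has $\pi_\phi(H)v_\lambda=\pi(\phi_*H)v_\lambda=\lambda(\phi_*H)v_\lambda=(\phi^*\lambda)(H)v_\lambda$, and by the formula for $\phi^*$ this weight is $(m_1,\dots,m_{\ell-1},-m_\ell)$. For any $X\in\mathfrak{n}^+$ we have $\phi_*X\in\mathfrak{n}^+$, hence $\pi_\phi(X)v_\lambda=\pi(\phi_*X)v_\lambda=0$. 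Since $(m_1,\dots,m_{\ell-1},-m_\ell)$ still satisfies $m_1\ge\cdots\ge m_{\ell-1}\ge|{-m_\ell}|$ it is dominant, so $v_\lambda$ is a highest weight vector of $\pi_\phi$ of the desired weight and the theorem follows.

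The only delicate point is to verify that the explicit matrix $a$ really induces the outer involution of $D_\ell$ rather than an inner automorphism of $\SO(2\ell)$; this is automatic once one observes $\phi_*(H_\ell)=-H_\ell$, since the Weyl group of $D_\ell$ admits only an even number of sign changes among the $\epsilon_j$, so no inner automorphism can negate a single coordinate. Everything else reduces to the one-line matrix conjugation in the first step.
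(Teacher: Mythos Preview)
Your argument is correct and follows essentially the same route as the paper: both show that the highest weight vector $v_\pi$ of $\pi$ remains a highest weight vector for $\pi_\phi$ by checking that $\operatorname{Ad}(a)$ preserves the Cartan subalgebra (negating only the last coordinate) and permutes the positive root spaces. The paper verifies this root vector by root vector while you phrase it via $\phi^*$ permuting $\Delta^+$ and $\phi_*$ preserving $\mathfrak{n}^+$, and your closing remark on the outer nature of the involution is a nice addition but not needed for the statement itself.
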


The matrices $I_{ki},\, 1\le i<k\le 2\ell$, with $-1$ in the place $(k,i)$, $1$ in the place $(i,k)$ and everywhere else zero, form a basis of the Lie algebra $\mathfrak{so}(2\ell)$.
The linear span
$$\lieh={\langle I_{21},I_{43},\dots,I_{2\ell,2\ell-1}\rangle}_\CC$$
is a Cartan subalgebra of $\mathfrak{so}(2\ell,\CC)$.

Now consider
 $$H=i(x_1I_{21}+\dots+x_\ell I_{2\ell,2\ell-1})\in\lieh,$$
and let $\epsilon_j\in\lieh^*$ be defined by $\epsilon_j(H)=x_j$, $1\le j\le\ell$. Then for $1\le j<k\le\ell$, the following matrices are root vectors of $\lieso(2\ell,\CC)$:
\begin{equation}\label{rootvectors}
\begin{split}
X_{\epsilon_j+\epsilon_k}&=I_{2k-1,2j-1}-I_{2k,2j}-i(I_{2k-1,2j}+I_{2k,2j-1}),\\
X_{-\epsilon_j-\epsilon_k}&=I_{2k-1,2j-1}-I_{2k,2j}+i(I_{2k-1,2j}+I_{2k,2j-1}),\\
X_{\epsilon_j-\epsilon_k}&=I_{2k-1,2j-1}+I_{2k,2j}-i(I_{2k-1,2j}-I_{2k,2j-1}),\\
X_{-\epsilon_j+\epsilon_k}&=I_{2k-1,2j-1}+I_{2k,2j}+i(I_{2k-1,2j}-I_{2k,2j-1}).
\end{split}
\end{equation}
We choose the following set of positive roots
$$\Delta^+=\{\epsilon_j+\epsilon_k, \epsilon_j-\epsilon_k: 1\le j<k\le\ell\}$$
having so $${\bf m}_\pi=m_1\epsilon_1+m_2\epsilon_2+\ldots+m_\ell\epsilon_\ell.$$
\begin{proof}[Proof of Theorem \ref{weights}]
First we prove that the highest weight vector $v_\pi$ of the representation $\pi$ is also a highest weight vector of $\pi_\phi$:

For every root vector $X_{\epsilon_j\pm \epsilon_k}$ with $1\le j < k <\ell$ we have that $\operatorname{Ad}(a) X_{\epsilon_j\pm \epsilon_k}$ $=X_{\epsilon_j\pm \epsilon_k}$. And, when $k=\ell$ we have that $\operatorname{Ad}(a) X_{\epsilon_j\pm \epsilon_\ell}=X_{\epsilon_j\mp \epsilon_k}.$ Hence, if we denote by $\dot\pi$ and $\dot\pi_\phi$ the representations of the complexification of $\mathfrak{so}(2\ell)$ corresponding to $\pi$ and $\pi_\phi$, respectively, we have $\dot\pi\circ \operatorname{Ad}(a)=\dot\pi_\phi$ and thus
$$\dot\pi_\phi(X_{\epsilon_j\pm \epsilon_k}) v_\pi =  \dot\pi(\operatorname{Ad}(a) X_{\epsilon_j\pm \epsilon_k} ) v_\pi =  \dot\pi(X_{\epsilon_j\pm \epsilon_k}) v_\pi = 0,$$
for $1\le j <k<\ell$. When $k=\ell$ we have
$$\dot\pi_\phi(X_{\epsilon_j\pm \epsilon_\ell}) v_\pi =  \dot\pi(\operatorname{Ad}(a) X_{\epsilon_j\pm \epsilon_\ell} ) v_\pi =  \dot\pi(X_{\epsilon_j\mp \epsilon_\ell}) v_\pi = 0.$$
Therefore $v_\pi$ is a highest weight vector of $\pi_\phi$.

Notice that $\operatorname{Ad}(a)I_{2j,2j-1}=I_{2j,2j-1}$ for $1\le j<\ell$ and that $\operatorname{Ad}(a)I_{2\ell,2\ell-1}$ $=-I_{2\ell,2\ell-1}$, then
$$\dot\pi_\phi(iI_{2j,2j-1}) v_\pi =  \dot\pi(\operatorname{Ad}(a) iI_{2j,2j-1}) v_\pi =  \dot\pi(iI_{2j,2j-1}) v_\pi = m_j v_\pi,$$
for $1\le j <\ell$. When $k=\ell$ we have
$$\dot\pi_\phi(iI_{2\ell,2\ell-1}) v_\pi =  \dot\pi(\operatorname{Ad}(a) iI_{2\ell,2\ell-1} ) v_\pi =  -\dot\pi(iI_{2\ell,2\ell-1}) v_\pi =-m_\ell v_\pi.$$
Hence the highest weight of $\pi_\phi$ is
$${\bf m}=(m_1,m_2,\ldots,m_{\ell-1},-m_\ell).$$

\end{proof}

\begin{cor}
 An irreducible representation $\pi$ of $\mathrm{SO}(2\ell)$, $\ell\in\NN$, of highest weight ${\bf m}=(m_1,m_2,\ldots,m_\ell)$ is equivalent to $\pi_\phi$ if and only if $m_\ell=0$.
\end{cor}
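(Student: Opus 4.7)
The plan is to derive the corollary as an essentially immediate consequence of Theorem \ref{weights}, together with the standard classification of irreducible representations of $\mathrm{SO}(2\ell)$ by highest weight. First I would recall that within the chosen Cartan subalgebra $\lieh$ and positive system $\Delta^+$ fixed in the preceding discussion, two irreducible finite-dimensional representations of $\mathrm{SO}(2\ell)$ are equivalent if and only if they have the same highest weight, with the admissible highest weights being precisely the tuples $(m_1,\ldots,m_\ell)\in\mathbb{Z}^\ell$ satisfying $m_1\ge m_2\ge\cdots\ge m_{\ell-1}\ge|m_\ell|$.

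Next, Theorem \ref{weights} tells us that if ${\bf m}_\pi=(m_1,\ldots,m_{\ell-1},m_\ell)$, then ${\bf m}_{\pi_\phi}=(m_1,\ldots,m_{\ell-1},-m_\ell)$. Note that this tuple still satisfies the dominance conditions since $|{-m_\ell}|=|m_\ell|$, so $\pi_\phi$ is indeed a bona fide irreducible representation with the stated highest weight, consistent with Proposition \ref{equivalent} (when it is equivalent to $\pi$) and Proposition \ref{equivalent2} (when it is not). Comparing the two weights coordinate by coordinate, the only place where they can differ is the last entry.

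Therefore the equivalence $\pi\sim\pi_\phi$ holds if and only if the highest weights coincide, which reduces to the single equation $m_\ell=-m_\ell$, i.e.\ $m_\ell=0$. I expect no real obstacle here: once Theorem \ref{weights} is in hand, the corollary is just the observation that $m_\ell=-m_\ell$ forces $m_\ell=0$ and conversely. The only conceptual point worth flagging explicitly in the write-up is that the criterion makes sense only for $\ell\ge 1$ and that it uses the Cartan-Weyl classification of $\widehat{\mathrm{SO}}(2\ell)$ by highest weight vectors, which was already implicit in the setup leading to Theorem \ref{weights}.
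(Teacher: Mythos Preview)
Your proposal is correct and matches the paper's approach: the corollary is stated there without proof, as an immediate consequence of Theorem~\ref{weights} together with the highest-weight classification, which is exactly the argument you give.
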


\section{Spherical Functions}
Let $(V_\tau,\tau)$ be a unitary irreducible representation of $G=\SO(n+1)$ and $(V_\pi,\pi)$ a unitary irreducible representation of $\SO(n)$.

Let us assume that $n$ is odd. Then $\OO(n)=\SO(n)\times F$ and the irreducible unitary representations of $\OO(n)$ are of the form $\gamma=\pi\otimes 1$ or $\gamma=\pi\otimes\epsilon$. Suppose that $\pi$ is a sub-representation of $\tau_{\vert_{\SO(n)}}$. Let us observe that $a\in\OO(n)$ as an element of $G$ becomes $-I\in G$. Clearly $\tau(-I)=\pm I$. Take $\gamma=\pi\otimes 1$ if  $\tau(-I)=I$ and $\gamma=\pi\otimes\epsilon$ if  $\tau(-I)=-I$. Then $\gamma$ is a sub-representation of $\tau_{\vert_{\OO(n)}}$. Let $\Phi^{\tau,\pi}$ and $\Phi^{\tau,\gamma}$ be, respectively, the corresponding spherical functions of $(G,\SO(n))$ and $(G,\OO(n))$.
\begin{thm}\label{par}
Assume that $n$ is odd. If $\Phi^{\tau,\pi}(-I)=I$, take $\gamma=\pi\otimes 1$, and if $\Phi^{\tau,\pi}(-I)=-I$, take $\gamma=\pi\otimes\epsilon$. Then $\Phi^{\tau,\pi}(g)=\Phi^{\tau,\gamma}(g)$ for all $g\in G$.
\end{thm}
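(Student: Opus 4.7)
The plan is to show that both spherical functions arise from the same representation $\tau$ via projection onto the same subspace of the representation space $E$, so that the projectors $P_\pi$ and $P_\gamma$ actually coincide as endomorphisms of $E$.

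First I would record the geometric fact that, under the embedding $\OO(n)\hookrightarrow G=\SO(n+1)$ used to view $\OO(n)$ as the stabilizer of the north pole, the element $a=\diag(-1,-1,\dots,-1)\in\OO(n)$ corresponds precisely to $-I\in G$ (this works because $n$ is odd, so $\det(a)=-1$, and $-I$ has determinant $(-1)^{n+1}=1$, putting it in $\SO(n+1)$). Since $-I$ is central in $G$ and $\tau$ is irreducible, Schur's lemma gives $\tau(-I)=\varepsilon I$ with $\varepsilon\in\{\pm1\}$. From the formula $\Phi^{\tau,\pi}(g)v=P_\pi\tau(g)v$ for $v\in E(\pi)$, I then get $\Phi^{\tau,\pi}(-I)=\tau(-I)|_{E(\pi)}=\varepsilon I$, so the hypothesis $\Phi^{\tau,\pi}(-I)=\pm I$ is nothing but the reading off of the scalar $\varepsilon$.

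Next I would use the decomposition $\OO(n)=\SO(n)\times F$ with $F=\{1,a\}$ to analyze $\tau|_{\OO(n)}$. Since $\tau(a)=\varepsilon I$ is a scalar on all of $E$, the restriction $\tau|_{\OO(n)}$ is equal to $(\tau|_{\SO(n)})\otimes\eta$, where $\eta$ is the trivial character of $F$ when $\varepsilon=1$ and the sign character $\epsilon$ when $\varepsilon=-1$. Consequently, the $\SO(n)$-isotypic component $E(\pi)\subset E$ coincides with the $\OO(n)$-isotypic component $E(\gamma)$, for $\gamma=\pi\otimes 1$ or $\gamma=\pi\otimes\epsilon$ according to the sign of $\varepsilon$ — which is precisely the choice made in the statement.

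Finally, since $E(\pi)=E(\gamma)$ the associated projectors $P_\pi$ and $P_\gamma$ agree as operators on $E$, and the common restriction $v\mapsto P\tau(g)v$ defines one and the same function on $G$; this gives $\Phi^{\tau,\pi}(g)=\Phi^{\tau,\gamma}(g)$ for all $g\in G$. No step here is a real obstacle — the only point that requires any care is the identification of $a\in\OO(n)$ with $-I\in G$, because it is what makes the scalar $\tau(-I)$ determine how $\tau|_{\OO(n)}$ sits inside $\hat\OO(n)=\hat\SO(n)\times\hat F$; once that is clean, the rest is a direct comparison of isotypic projectors.
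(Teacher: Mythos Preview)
Your proposal is correct and follows essentially the same route as the paper: identify $a$ with $-I\in G$, use Schur's lemma to get $\tau(-I)=\pm I$, conclude that the $\SO(n)$-isotypic subspace $V_\pi\subset V_\tau$ is automatically $\OO(n)$-stable with the $F$-action given by the appropriate sign, and hence that $P_\pi=P_\gamma$. The paper's proof is more terse but the logic is identical.
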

\begin{proof}
As  $\SO(n)$-modules $V_\tau=V_\pi\oplus V_\pi^\perp$. But since $\tau(a)=\tau(-I)=\pm I$ the decomposition $V_\tau=V_\pi\oplus V_\pi^\perp$ is also an $\OO(n)$-decomposition. Hence, the $\SO(n)$-projection $P_\pi$ onto $V_\pi$ is equal to the $\OO(n)$-projection $P_\gamma$ onto $V_\pi$. Therefore $\Phi^{\tau,\pi}(g)=P_\pi\tau(g)P_\pi=P_\gamma\tau(g)P_\gamma=\Phi^{\tau,\gamma}(g)$, completing the proof.
\end{proof}

\

Let us assume now that $n$ is even, then $\OO(n)=\SO(n)\rtimes F$.
Suppose that $\pi\in\hat \SO(n)$  and that $\pi\sim\pi_\phi$. Then $\gamma=\pi\cdot\epsilon_A$, where $A\in\GL(V_\pi)$ is such that $\pi_\phi=A\pi A^{-1}, A^2=I$, is an irreducible representation of $\OO(n)$ in $V_\pi$ as we have seen in Proposition \ref{equiv}. Now we use this result to obtain the following Theorem.

\begin{thm}\label{impar} Assume that $n$ is even. Let $a=\diag(1,1,\dots,-1)\in\OO(n)$ be identified with
$a=\diag(1,1,\dots,-1,-1)\in\SO(n+1)$.  Suppose that $\pi$ is a sub-representation of $\tau_{\vert_{\SO(n)}}$ and that $\pi\sim\pi_\phi$. Set  $A=\Phi^{\tau,\pi}(a)$, and take
$\gamma=\pi\cdot\epsilon_A$. Then $\Phi^{\tau,\pi}(g)=\Phi^{\tau,\gamma}(g)$ for all $g\in G$.
\end{thm}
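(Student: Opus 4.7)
The plan is to show that the subspace $V_\pi\subseteq V_\tau$ is already stable under the $\OO(n)$-action via $\tau$, that the resulting $\OO(n)$-representation on $V_\pi$ is precisely $\gamma=\pi\cdot\epsilon_A$, and that consequently the $\OO(n)$-isotypic projection $P_\gamma$ coincides with the $\SO(n)$-isotypic projection $P_\pi$. Once this is established, $\Phi^{\tau,\gamma}(g)=P_\gamma\tau(g)P_\gamma=P_\pi\tau(g)P_\pi=\Phi^{\tau,\pi}(g)$ follows for every $g\in G$.

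First I would verify that conjugation by $a\in \SO(n+1)$ restricts to the automorphism $\phi$ of $\SO(n)$. Writing $a=\diag(a',-1)\in\SO(n+1)$ with $a'=\diag(1,\dots,1,-1)\in\OO(n)$ and embedding $k\in\SO(n)$ as $\diag(k,1)$, a one-line block computation gives $aka^{-1}=\diag(a'ka'^{-1},1)=\phi(k)$. Since $\pi\sim\pi_\phi$ by hypothesis, the space $\tau(a)V_\pi$ is an $\SO(n)$-submodule of $V_\tau$ of type $\pi_\phi\sim\pi$, so it is contained in the $\pi$-isotypic component of $V_\tau$. Invoking the classical multiplicity-free interlacing branching rule for $\SO(n+1)\downarrow\SO(n)$, that isotypic component equals $V_\pi$ itself; therefore $\tau(a)V_\pi\subseteq V_\pi$.

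This $\OO(n)$-stability collapses $A=\Phi^{\tau,\pi}(a)=P_\pi\tau(a)|_{V_\pi}$ to simply $\tau(a)|_{V_\pi}$. The identities $A^2=\tau(a^2)|_{V_\pi}=I$ (because $a^2=I$ in $\SO(n+1)$) and $A\pi(k)A^{-1}=\tau(aka^{-1})|_{V_\pi}=\pi_\phi(k)$ then place us exactly in the setting of Proposition~\ref{equivalent}. A direct check shows that the $\OO(n)$-module structure inherited on $V_\pi$ from $\tau$ is precisely $\gamma=\pi\cdot\epsilon_A$: for $k\in\SO(n)$ one has $\gamma(k)=\pi(k)=\tau(k)|_{V_\pi}$ and $\gamma(ka)=\pi(k)A=\tau(k)\tau(a)|_{V_\pi}=\tau(ka)|_{V_\pi}$. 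Hence $V_\pi\subseteq P_\gamma V_\tau$. Conversely, since $\gamma|_{\SO(n)}=\pi$, every $\OO(n)$-submodule of $V_\tau$ of type $\gamma$ restricts to a sum of copies of $\pi$, so by the same multiplicity-one principle $P_\gamma V_\tau\subseteq V_\pi$. Thus $P_\gamma V_\tau=V_\pi=P_\pi V_\tau$, the two projections agree, and the conclusion is immediate.

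The only nontrivial ingredient is the multiplicity-one branching $\SO(n+1)\downarrow\SO(n)$; once this classical fact is invoked, the remainder of the argument is a matter of matching the $\OO(n)$-action on $V_\pi$ coming from $\tau$ with the abstractly constructed representation $\pi\cdot\epsilon_A$ of Proposition~\ref{equivalent}.
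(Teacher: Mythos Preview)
Your proof is correct and follows essentially the same approach as the paper: use the multiplicity-one branching for $(\SO(n+1),\SO(n))$ to conclude $\tau(a)V_\pi=V_\pi$, identify $A=\Phi^{\tau,\pi}(a)=\tau(a)|_{V_\pi}$, check that the resulting $\OO(n)$-action on $V_\pi$ via $\tau$ is $\gamma=\pi\cdot\epsilon_A$, and deduce $P_\gamma=P_\pi$. Your write-up is slightly more explicit than the paper's in verifying the conjugation relation $aka^{-1}=\phi(k)$ and in arguing the converse inclusion $P_\gamma V_\tau\subseteq V_\pi$, but the structure is identical.
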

\begin{proof}
The first thing we have to prove is that  $A\in\GL(V_\pi)$, $\pi_\phi=A\pi A^{-1}$ and  $A^2=I$. This last property follows directly from $a^2=e$.

For all $k\in \SO(n)$ we have
\begin{equation}\label{equivalencia}
\pi_\phi(k)=\pi(aka)=\tau(aka)_{\vert V_\pi}=\tau(a)\tau(k)\tau(a)_{\vert V_\pi}.
\end{equation}
Therefore $\tau(a)V_\pi$ is a $\SO(n)$-module equivalent to $\pi_\phi$. Since $\pi_\phi\sim\pi$, and by the multiplicity one property of the pair $(\SO(n+1),\SO(n))$, we obtain that $V_\pi=\tau(a)V_\pi$. Therefore $A=\tau(a)_{\vert V_\pi}\in\GL(V_\pi)$ and $\pi_\phi=A\pi A^{-1}$.
Hence $V_\pi$ is an $\OO(n)$ submodule of $V_\tau$  and the corresponding representation is $\gamma=\pi\cdot\epsilon_A$. This implies that the $\SO(n)$-projection $P_\pi$ onto $V_\pi$ is equal to the $\OO(n)$-projection $P_\gamma$ onto $V_\pi$. Therefore $\Phi^{\tau,\pi}(g)=P_\pi\tau(g)P_\pi=P_\gamma\tau(g)P_\gamma=\Phi^{\tau,\gamma}(g)$. Finally we observe that $\Phi^{\tau,\pi}(a)=P_\pi\tau(a)P_\pi=\tau(a)_{\vert V_\pi}$, completing the proof.
\end{proof}

\

Let us assume that $n$ is even, and take $\pi\in\hat \SO(n)$ such that $\pi\nsim\pi_\phi$. Let us consider the $\SO(n)$-module $V_\pi\times V_{\pi_{\phi}}$ and define
$\gamma(k)(v,w)=(\pi(k)v,\pi_{\phi}(k)w)$, $\gamma(ka)(v,w)=(\pi(k)w,\pi_{\phi}(k)v)$ for all $k\in \SO(n)$, $v\in V_\pi$ and $w\in V_{\pi_{\phi}}$. Then $\gamma$ is an irreducible representation of $\OO(n)$ in  $V_\pi\times V_{\pi_{\phi}}$ .

\begin{thm}\label{Matrix} Assume that $n$ is even. Let $a=\diag(1,1,\dots,-1)\in\OO(n)$ be identified with
$a=\diag(1,1,\dots,-1,-1)\in\SO(n+1)$. Suppose that $\pi$ is a sub-representation of $\tau_{\vert_{\SO(n)}}$ and that $\pi\nsim\pi_\phi$. Then $\tau(a)V_\pi\sim V_{\pi_{\phi}}$ as $\SO(n)$-modules and $V_\pi\oplus \tau(a)V_\pi$ is an irreducible $\OO(n)$-submodule of $V_\tau$ equivalent to the irreducible representation $\gamma$ in
$V_\pi\times V_{\pi_{\phi}}$ constructed above. Moreover,
$$\Phi^{\tau,\gamma}(g)=\left(\begin{matrix} \Phi^{\tau,\pi}(g) & \Phi^{\tau,\pi}(ga)\\ \Phi^{\tau,\pi_\phi}(ga) & \Phi^{\tau,\pi_\phi}(g)\end{matrix}\right)$$
for all $g\in G$.
\end{thm}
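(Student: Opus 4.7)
The plan is to establish the four claims in order. The crucial computation is the final matrix formula, which requires carefully tracking the identification $\tau(a):V_{\pi_\phi}\to\tau(a)V_\pi$ together with $\tau(a)^2=I$.

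First, for $v\in V_\pi$ and $k\in\SO(n)$, using $a^2=e$, I would compute
$$\tau(k)\tau(a)v=\tau(a)\tau(aka)v=\tau(a)\pi_\phi(k)v,$$
so $\tau(a):V_\pi\to\tau(a)V_\pi$ is an $\SO(n)$-intertwiner from $\pi_\phi$ (on the underlying vector space $V_\pi$) to the $\SO(n)$-action on $\tau(a)V_\pi$; in particular $\tau(a)V_\pi\simeq V_{\pi_\phi}$ as $\SO(n)$-modules. Next, by multiplicity one for $(\SO(n+1),\SO(n))$, the $\pi$-isotypic component of $V_\tau$ is exactly $V_\pi$ and the $\pi_\phi$-isotypic component is exactly $\tau(a)V_\pi$; since $\pi\nsim\pi_\phi$ these are disjoint, so the sum $V_\pi+\tau(a)V_\pi$ is direct. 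It is manifestly $\SO(n)$-stable and swapped by $\tau(a)$, hence $\OO(n)$-stable. For irreducibility as an $\OO(n)$-module, the $\SO(n)$-submodules of $V_\pi\oplus\tau(a)V_\pi$ are only $0$, $V_\pi$, $\tau(a)V_\pi$ and the full space (the two summands being inequivalent irreducibles), and of these only $0$ and the full space are stable under the swap $\tau(a)$.

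Define $T:V_\pi\times V_{\pi_\phi}\to V_\pi\oplus\tau(a)V_\pi$ by $T(v,w)=v+\tau(a)w$. The relation $T\gamma(k)=\tau(k)T$ follows from step one, and $T\gamma(a)(v,w)=T(w,v)=w+\tau(a)v=\tau(a)(v+\tau(a)w)=\tau(a)T(v,w)$, so $\gamma$ is realized inside $V_\tau$ as stated. For the matrix formula I would write $P_\gamma=P_\pi+P_{\pi_\phi}$ (sum of the two $\SO(n)$-isotypic projections, using multiplicity one) and expand
$$P_\gamma\tau(g)T(v,w)=\bigl[P_\pi\tau(g)v+P_\pi\tau(g)\tau(a)w\bigr]+\bigl[P_{\pi_\phi}\tau(g)v+P_{\pi_\phi}\tau(g)\tau(a)w\bigr].$$
The first bracket lies in $V_\pi$ and equals $\Phi^{\tau,\pi}(g)v+\Phi^{\tau,\pi}(ga)w$ by definition of $\Phi^{\tau,\pi}$. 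For the second bracket, which lies in $\tau(a)V_\pi$, I would use that with the embedding $\iota=\tau(a):V_{\pi_\phi}\to\tau(a)V_\pi$ one has $\Phi^{\tau,\pi_\phi}(h)=\tau(a)P_{\pi_\phi}\tau(h)\tau(a)$; then $\tau(a)^2=I$ yields $P_{\pi_\phi}\tau(g)v=\tau(a)\Phi^{\tau,\pi_\phi}(ga)v$ and $P_{\pi_\phi}\tau(g)\tau(a)w=\tau(a)\Phi^{\tau,\pi_\phi}(g)w$. Reading off the two $T$-components gives the asserted block matrix.

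The main obstacle is this last step: one must fix the identification $V_{\pi_\phi}\simeq\tau(a)V_\pi$ through $\iota=\tau(a)$ once and for all, and then systematically use $\tau(a)^2=I$ to trade a $g$ for $ga$ in exactly the right places. This alternation is precisely what produces the off-diagonal entries $\Phi^{\tau,\pi}(ga)$ and $\Phi^{\tau,\pi_\phi}(ga)$ in the matrix.
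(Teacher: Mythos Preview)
Your proposal is correct and follows essentially the same route as the paper: the intertwiner $T(v,w)=v+\tau(a)w$ and the decomposition $P_\gamma=P_\pi+P_{\pi_\phi}$ are exactly what the paper uses. The only cosmetic difference is the final step: the paper identifies the off-diagonal blocks in one stroke via the identity $\Phi^{\tau,\gamma}(ga)=\Phi^{\tau,\gamma}(g)\,\gamma(a)$ with $\gamma(a)=\left(\begin{smallmatrix}0&I\\ I&0\end{smallmatrix}\right)$, whereas you unpack each block directly using $\Phi^{\tau,\pi_\phi}(h)=\tau(a)P_{\pi_\phi}\tau(h)\tau(a)$ and $\tau(a)^2=I$; these are the same computation viewed from two sides.
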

\begin{proof}
That $\tau(a)V_\pi\sim V_{\pi_\phi}$ as $\SO(n)$-modules  follows from \eqref{equivalencia}. Also, if we make the identification  $V_\pi\times V_{\pi_\phi}\sim V_\pi\oplus \tau(a)V_\pi$ via the $\SO(n)$-isomorphism
$(v,w)\mapsto v+\tau(a)w$, and using again that $\pi_\phi(k)w=\tau(a)\tau(k)\tau(a)w$ (see \eqref{equivalencia}), we have
\begin{equation*}
\begin{split}
\gamma(k)(v,w)&= (\pi(k)v,\pi_\phi(k)w)=(\pi(k)v,\tau(a)\tau(k)\tau(a)w)\\
&\sim\pi(k)v+\tau(k)\tau(a)w=\tau(k)(v+\tau(a)w)
\end{split}
\end{equation*}
 for all $k\in \SO(n)$, and
$$\gamma(a)(v,w)=(w,v)\sim(w+\tau(a)v)=\tau(a)(v+	\tau(a)w).$$
This proves that $V_\pi\oplus \tau(a)V_\pi$ as an $\OO(n)$-submodule of $V_\tau$ is equivalent to the irreducible representation $\gamma$ in $V_\pi\times V_{\pi_{\phi}}$. Therefore $P_\gamma=P_\pi\oplus P_{\pi_\phi}$.

Hence, for all $g\in G$ we have,
\begin{equation*}
\begin{split}
\Phi^{\tau,\gamma}(g)&=(P_\pi\oplus P_{\pi_\phi})\tau(g)(P_\pi\oplus P_{\pi_\phi})\\
&=\Phi^{\tau,\pi}(g)+\Phi_{21}(g)+\Phi_{12}(g)+\Phi^{\tau,\pi_\phi}(g),
\end{split}
\end{equation*}
where $\Phi_{21}(g)=P_{\pi_\phi}\tau(g)P_\pi$ and $\Phi_{12}(g)=P_{\pi}\tau(g)P_{\pi_\phi}$.
Thus in matrix form we have
$$\Phi^{\tau,\gamma}(g)=\left(\begin{matrix} \Phi^{\tau,\pi}(g) & \Phi_{12}(g)\\ \Phi_{21}(g) & \Phi^{\tau,\pi_\phi}(g)\end{matrix}\right).$$
From the identity $\Phi^{\tau,\gamma}(ga)=\Phi^{\tau,\gamma}(g)\tau(a)_{|_{V_\pi\oplus \tau(a)V_\pi}}$ we get
$$\left(\begin{matrix} \Phi^{\tau,\pi}(ga) & \Phi_{12}(ga)\\ \Phi_{21}(ga) & \Phi^{\tau,\pi_\phi}(ga)\end{matrix}\right)=\left(\begin{matrix} \Phi^{\tau,\pi}(g) & \Phi_{12}(g)\\ \Phi_{21}(g) & \Phi^{\tau,\pi_\phi}(g)\end{matrix}\right)\left(\begin{matrix} 0 & I\\ I & 0\end{matrix}\right),$$
which is equivalent to $\Phi_{12}(g)=\Phi^{\tau,\pi}(ga)$ and  $\Phi_{21}(g)=\Phi^{\tau,\pi_\phi}(ga)$.
The theorem is proved.
\end{proof}

\section{Appendix}\label{appendix}

The irreducible spherical functions of trivial $K$-type of  $(\SO(n+1),\SO(n))$ and of $(\SO(n+1),\OO(n))$ are, respectively, the zonal spherical functions of $S^n$ and $P^n(\RR)$. According to our Theorems  \ref{par} and \ref{impar} the zonal spherical functions $\phi$ of  $P^n(\RR)$, as functions on $\SO(n+1)$,  coincide with those zonal spherical functions $\varphi$ of $S^n$ such that $\varphi(-I)=1$.

As we said in the Introduction  the zonal spherical functions on the $n$-dimensional sphere and on the corresponding real projective space are, respectively,  given by
\begin{align*}
\varphi_j^*(\theta) =c_j\, P_j^{\left(\tfrac{n-2}{2},\tfrac{n-2}{2}\right)}(\cos \theta), \qquad
\phi_j^*(\theta) =c'_j\, P_j^{\left(\tfrac{n-2}{2},-\tfrac{1}{2}\right)}(\cos \theta),
\end{align*}
with $c_j$, $c'_j$ scalars such that $\varphi_j(0)=1=\phi_j(0)$, and $0\le\theta\le\pi$.

In this appendix we explain this apparent inconsistency.  To begin with, we notice that in both spaces the metric is chosen normalized by the diameter  $L=\pi$. For a given $g\in \SO(n+1)$ we denote by $\theta(g)$ the distance in the sphere between $g\cdot o$ and the origin $o$, and analogously we denote by $\theta'(g)$ the distance in the projective space between $g\cdot o$ and the origin $o$. Thus, it is not difficult to see that
\begin{align*}
2\theta(g)&=\theta'(g),\hfill &\text{for } 0\le\theta(g)\le\pi/2,\\
2\pi-2\theta(g)&=\theta'(g),\hfill &\text{for } \pi/2\le\theta(g)\le\pi.
\end{align*}
Therefore we have that
\begin{equation}\label{cos}
 \cos(2\theta(g))=\cos(\theta'(g)),
\end{equation}
for any $g\in \SO(n+1)$.
In the other hand from \cite[(3.1.1)]{AAR} we know that Jacobi polynomials satisfy
\begin{equation}\label{a}
P_{2k}^{(\alpha,\alpha)}(x)=\frac{k!(\alpha+1)_{2k}}{(2k)!(\alpha+1)_{k}}  P_{k}^{(\alpha,-1/2)}(2x^2-1).
\end{equation}
Then, if we put $x=\cos(\theta(g))$ in \eqref{a} we obtain
$$
P_{2k}^{(\alpha,\alpha)}(\cos(\theta(g)))=\frac{k!(\alpha+1)_{2k}}{(2k)!(\alpha+1)_{k}}  P_{k}^{(\alpha,-1/2)}(\cos(2\theta(g))),
$$
hence, using \eqref{cos} we have that $\varphi_{2j}^*(\theta(g))=\phi_j^*(\theta'(g))$ for all $g\in\SO(n+1)$. In other words the following identity between zonal spherical functions holds: $\varphi_{2j}=\phi_j$ as functions on $\SO(n+1)$ for all $j\ge0$.

\bibliography{esfericas}{}

\providecommand{\bysame}{\leavevmode\hbox to3em{\hrulefill}\thinspace}
\providecommand{\MR}{\relax\ifhmode\unskip\space\fi MR }
\providecommand{\MRhref}[2]{%
  \href{http://www.ams.org/mathscinet-getitem?mr=#1}{#2}
}
\providecommand{\href}[2]{#2}
\begin{thebibliography}{AAR00}

\bibitem[AAR00]{AAR}
George~E. Andrews, Richard Askey, and Ranjan Roy, \emph{{Special functions.
  Paperback ed.}}, {Encyclopedia of Mathematics and Its Applications 71.
  Cambridge: Cambridge University Press. xvi, 664~p.}, 2000 (English).

\bibitem[GPT02]{GPT02a}
F.~A. Gr{\"u}nbaum, I.~Pacharoni, and J.~Tirao, \emph{Matrix valued spherical
  functions associated to the complex projective plane}, J. Funct. Anal.
  \textbf{188} (2002), no.~2, 350--441. \MR{1883412 (2003i:43012)}

\bibitem[GV88]{GV88}
R.~Gangolli and V.~S. Varadarajan, \emph{{Harmonic analysis of spherical
  functions on real reductive groups.}}, {Ergebnisse der Mathematik und ihrer
  Grenzgebiete, 101. Berlin etc.: Springer-Verlag. xiv, 365 p. }, 1988.

\bibitem[Hel62]{H62}
S.~Helgason, \emph{Differential geometry and symmetric spaces}, Pure Appl.
  Math., Academic Press, New York, NY, 1962.

\bibitem[Koo73]{K73}
T.~Koornwinder, \emph{The addition formula for {J}acobi polynomials and
  spherical harmonics}, SIAM J. Appl. Math. \textbf{25} (1973), 236--246, Lie
  algebras: applications and computational methods (Conf., Drexel Univ.,
  Philadelphia, Pa., 1972). \MR{0346212 (49 \#10938)}

\bibitem[PT12]{PT12b}
I.~Pacharoni and J.~Tirao, \emph{Matrix valued spherical functions associated
  to the complex projective space}, Preprint.

\bibitem[PTZ12]{PTZ12}
I.~Pacharoni, J.~Tirao, and I.~Zurri\'an, \emph{Spherical functions associated
  to the three dimensional sphere}, Preprint.

\bibitem[Tir77]{T77}
J.~Tirao, \emph{{Spherical functions.}}, Rev. Un. Mat. Argentina \textbf{28}
  (1977), 75--98 (English).

\bibitem[VK92]{vilenkin}
N.~Ja. Vilenkin and A.~U. Klimik, \emph{{Representation of Lie Groups and
  Special Functions}}, vol.~3, Kluwer Academic Publishers, 1992.

\end{thebibliography}
\bibliographystyle{amsalpha}

\end{document}